%% file: main.tex
\documentclass[reqno]{amsart}
\usepackage{amsmath}
\usepackage{amsfonts}
\usepackage{latexsym}
\usepackage{mathtools}
\usepackage{amsthm}
\usepackage{amssymb}
\usepackage{tikz}
\numberwithin{equation}{section}
\newcommand{\cat}{\text{cat}}

\newtheorem{theorem}{Theorem}[section]
\newtheorem*{theorem*}{Theorem}
\newtheorem{lemma}[theorem]{Lemma}
\newtheorem{definition}[theorem]{Definition}
\newtheorem{proposition}[theorem]{Proposition}
\newtheorem{corollary}[theorem]{Corollary}
\newtheorem{remark}[theorem]{Remark}

\newtheorem*{acknowledgments*}{Acknowledgments}
\usepackage[hidelinks]{hyperref}
\usepackage{docmute}

\title[LS Category of Lee Forms on LCS Manifolds]{Lusternik--Schnirelmann category of Lee Forms on locally conformally symplectic manifolds}
\author{Kenji Fukushi}
\address{Department of Mathematics, Graduate School of Science, Kyoto University, Sakyo-ku, Kyoto 606-8502, Japan}
\email{fukushi.kenji.85n@st.kyoto-u.ac.jp}

\date{}
\begin{document}

\begin{abstract}
\sloppy 
The Lusternik--Schnirelmann category of a closed symplectic manifold admits various estimates. Locally conformally symplectic (LCS) geometry is a generalization of symplectic geometry involving a closed one-form, called the Lee form. In this paper, we introduce Farber's Lusternik--Schnirelmann category for closed one-forms into LCS geometry by applying it to the Lee class. In particular, this provides a new topological approach to LCS structures of the second kind. We then obtain lower bounds for this invariant under LCS blow-ups.

\end{abstract}

\maketitle

\input{intro.tex}

\input{preliminaries.tex}

\input{lscatoflee.tex}

\input{LCSmanifoldswithpositiveLeecategory.tex}

\input{lowerbound.tex}

\input{blowup.tex}

\section*{Acknowledgments}
The author would like to thank his supervisor, Tsuyoshi Kato, for his valuable advice and continuous support.
The author used ChatGPT (OpenAI) for English-language polishing and readability improvements. The resulting text was reviewed and edited by the author, who takes full responsibility for the manuscript.

\end{document}

%% file: intro.tex
\section{Introduction}

The Lusternik--Schnirelmann category (LS category) is a classical homotopy invariant. Throughout this paper, we use the unnormalized convention. All topological spaces are assumed to be connected unless otherwise stated.

\begin{definition}
Let $X$ be a topological space. The Lusternik--Schnirelmann category $\cat(X)$ is the least positive integer $k$ such that $X$ can be covered by $k$ open subsets $U_1,\ldots,U_k$ for which each inclusion $U_i\hookrightarrow X$ is null-homotopic.
\end{definition}

For a path-connected finite-dimensional CW complex $X$, the classical cohomological and dimensional estimates give
\[
\operatorname{cup\text{-}length}(X)+1\leq \cat(X)\leq \dim X+1;
\]
see, for example, \cite{CLOT}. Here $\operatorname{cup\text{-}length}(X)$ denotes the largest integer $r$ for which there exist positive-degree cohomology classes $u_1,\ldots,u_r$ such that
\[
u_1\smile\cdots\smile u_r\neq0.
\]

These estimates are particularly effective for symplectic manifolds. Let $(M^{2n},\omega)$ be a closed symplectic manifold. Since $[\omega]^n\neq0\in H^{2n}(M;\mathbb R)$, the cup-length estimate gives $\cat(M)\geq n+1$. Under stronger topological assumptions, this lower bound can be substantially improved. Rudyak and Oprea proved that if the symplectic class vanishes on the image of the Hurewicz homomorphism $\pi_2(M)\to H_2(M;\mathbb Z)$, then the LS category attains its maximal value \cite{RO}. In our convention, their result reads $\cat(M)=2n+1$. In particular, this holds when $\pi_2(M)=0$, and hence for aspherical symplectic manifolds.

Locally conformally symplectic geometry provides a natural setting in which the usual symplectic cup-product argument is no longer directly available. An LCS manifold is a triple $(M^{2n},\Omega,\theta)$, where $\Omega$ is a nondegenerate two-form and $\theta$ is a closed one-form satisfying
\[
d\Omega=\theta\wedge\Omega.
\]
The one-form $\theta$ is called the \emph{Lee form}; see, for example, \cite{B2}. In contrast with the symplectic case, $\Omega$ is in general not closed and therefore does not determine a class in ordinary de Rham cohomology. Consequently, although $\Omega^n$ is a volume form, the ordinary cup-length argument arising from a symplectic cohomology class is no longer available.

The behavior of the ordinary LS category is correspondingly quite different in the LCS setting. While every closed symplectic $2n$-manifold satisfies $\cat(M)\geq n+1$, closed LCS $2n$-manifolds realize every value
\[
3,4,\ldots,2n+1
\]
of the ordinary LS category. More strongly, every value in this range is realized by an LCS manifold of the first kind; see Definition~\ref{deffirstkind} and Proposition~\ref{propcatlcs}. Thus the ordinary LS category of a closed LCS manifold has no dimension-dependent lower bound beyond $3$. Moreover, even maximal ordinary LS category gives no obstruction to an LCS structure being of the first kind.

This motivates the use of a refinement of LS category associated with a closed one-form. Farber introduced such an invariant \cite{F1,F2}, in connection with Morse--Novikov theory \cite{N}. For a finite CW complex $X$ and a cohomology class $\xi\in H^1(X;\mathbb R)$, one obtains an invariant $\cat(X,\xi)$, which agrees with the ordinary Lusternik--Schnirelmann category when $\xi=0$.

Roughly speaking, $\cat(X,\xi)$ measures the number of categorical subsets required to cover $X$ after allowing one additional part of the space to be moved arbitrarily far in the negative direction with respect to a closed one-form representing $\xi$. More precisely, for every $N>0$, one allows a subset to be deformed so that the integral of the closed one-form along the deformation decreases by at least $N$, while the remaining part is covered by categorical subsets. We recall the precise definition in Section~2.

This construction parallels the passage from classical Morse theory to Morse--Novikov theory. For a closed manifold $M$, classical Lusternik--Schnirelmann theory gives
\[
\#\operatorname{Crit}(f)\geq \cat(M)
\]
for every smooth function $f\colon M\to\mathbb R$; see, for example, \cite{CLOT}. Morse--Novikov theory replaces exact one-forms $df$ by arbitrary closed one-forms, while Farber's invariant provides a corresponding extension of LS category to cohomology classes represented by closed one-forms.

For an LCS manifold $(M,\Omega,\theta)$, the Lee form determines a distinguished cohomology class $[\theta]\in H^1(M;\mathbb R)$, and hence a distinguished invariant $\cat(M,[\theta])$. Morse--Novikov and Lichnerowicz--Novikov cohomologies associated with the Lee form have been studied in both locally conformally symplectic and locally conformally K\"ahler geometry; see, for example, \cite{AOT,LV,OV,YZ}. In contrast, the category $\cat(M,[\theta])$ appears not to have been systematically studied in the LCS setting.

The category associated with the Lee class gives an obstruction that the ordinary LS category does not detect. If a cohomology class $\xi$ on a closed manifold admits a nowhere-vanishing closed one-form representative, then $\cat(M,\xi)=0$. As recalled after Definition~\ref{deffirstkind}, the Lee form of an LCS structure of the first kind is nowhere vanishing. Hence every LCS structure of the first kind satisfies $\cat(M,[\theta])=0$. Therefore, $\cat(M,[\theta])>0$ implies that every LCS structure with Lee class $[\theta]$ is of the second kind. Thus positivity of $\cat(M,[\theta])$ gives a topological obstruction, depending only on the Lee class, to the existence of an LCS structure of the first kind with that Lee class. This is in sharp contrast with the ordinary LS category $\cat(M)$, which may take any possible value even among LCS manifolds of the first kind.

We first apply this observation to several classes of LCS manifolds. Products of closed surfaces provide examples for which the category can be computed explicitly using results of Farber and Sch\"utz \cite{FS1}. We also construct four-dimensional examples with positive category by combining topological properties of the underlying manifold with the existence theorem of Bertelson and Meigniez for conformal symplectic structures with prescribed nonzero Lee class \cite{BM2}.

The main result of this paper concerns LCS blow-ups. Let $(M,\Omega,\theta)$ be a closed LCS manifold with $[\theta]\neq0$, let $i\colon Z\hookrightarrow M$ be a nonempty compact induced globally conformally symplectic submanifold of real codimension $2r$, where $r\geq2$, and let
\[
\pi\colon\widetilde M\longrightarrow M
\]
be the blow-up along $Z$. By the blow-up theorem of Yang, Yang and Zhao, $\widetilde M$ admits an LCS structure with Lee class $\pi^*[\theta]$ \cite{YYZ}. We prove that
\[
\cat(\widetilde M,\pi^*[\theta])\geq r-1.
\]
More generally, if there exist positive-degree classes $u_1,\ldots,u_s\in H^{>0}(M;\mathbb C)$ whose product restricts nontrivially to $Z$, then
\[
\cat(\widetilde M,\pi^*[\theta])\geq r-1+s.
\]
The proof combines the cohomological estimate of Farber and Sch\"utz \cite{FS1} with the topology of the exceptional divisor.

Since $r\geq2$, the basic lower bound is already positive. It follows that the pullback class $\pi^*[\theta]$ admits no nowhere-vanishing closed one-form representative. Consequently, every LCS structure on $\widetilde M$ whose Lee class is $\pi^*[\theta]$ is of the second kind. This conclusion depends only on the pullback Lee class and not on the particular LCS structure produced by the blow-up construction.

When the original LCS structure is of the first kind, the blow-up therefore produces a jump in the category associated with the Lee class:
\[
\cat(M,[\theta])=0,
\qquad
\cat(\widetilde M,\pi^*[\theta])\geq r-1>0.
\]
Thus blowing up along any nonempty compact induced globally conformally symplectic submanifold of real codimension at least four changes the Lee class from one admitting a nowhere-vanishing closed one-form representative to one for which such a representative is impossible.

Finally, we compare this obstruction with the Euler characteristic. A nonzero Euler characteristic already excludes LCS structures of the first kind, since their Lee forms are nowhere vanishing. We construct a four-dimensional example for which a point blow-up satisfies $\chi(\widetilde M)=0$ but $\cat(\widetilde M,\pi^*\xi)>0$. Thus the Euler characteristic gives no obstruction in this example, whereas the category associated with the Lee class still excludes LCS structures of the first kind with Lee class $\pi^*\xi$. This shows that the category associated with a closed one-form can detect obstructions to first-kind LCS structures that are invisible to the Euler characteristic.

%% file: Preliminaries.tex
\section{Preliminaries}
\subsection{Lusternik--Schnirelmann category for closed one-forms}

The Lusternik--Schnirelmann category associated with a closed one-form is a homotopy-theoretic invariant, and it is therefore useful to formulate the theory on spaces more general than smooth manifolds. Following Farber, we use the notion of a continuous closed one-form on a topological space \cite{F2,FS2}. This allows the theory to be defined for finite CW complexes and makes the dependence on the pair $(X,\xi)$ explicit.

\begin{definition}
Let $X$ be a topological space. A \emph{continuous closed one-form} $\omega$ on $X$ is represented by an open cover $\mathcal U=\{U\}$ of $X$ together with a collection of continuous functions
\[
f_U\colon U\longrightarrow\mathbb R,
\qquad U\in\mathcal U,
\]
such that, for every $U,V\in\mathcal U$, the difference
\[
f_U|_{U\cap V}-f_V|_{U\cap V}
\]
is locally constant on $U\cap V$. Two such collections $\{f_U\}_{U\in\mathcal U}$ and $\{g_V\}_{V\in\mathcal V}$ are said to be equivalent if their union satisfies the same condition on the cover $\mathcal U\cup\mathcal V$. A continuous closed one-form is an equivalence class of such collections.
\end{definition}

A continuous function $f\colon X\to\mathbb R$ determines a continuous closed one-form, denoted by $df$, by taking the cover consisting of $X$ itself and the single local potential $f$. Thus this notion extends the usual notion of an exact one-form. On a smooth manifold, an ordinary smooth closed one-form determines a continuous closed one-form by taking local primitives.

Continuous closed one-forms can be integrated along continuous paths. If $\omega$ is represented by $\{f_U\}_{U\in\mathcal U}$ and $\gamma\colon[0,1]\to X$ is a continuous path, choose a subdivision
\[
0=t_0<t_1<\cdots<t_m=1
\]
such that $\gamma([t_i,t_{i+1}])\subset U_i$ for some $U_i\in\mathcal U$. Then one defines
\[
\int_\gamma\omega
=
\sum_{i=0}^{m-1}
\bigl(
f_{U_i}(\gamma(t_{i+1}))
-
f_{U_i}(\gamma(t_i))
\bigr).
\]
This integral is independent of the choices involved and depends only on the homotopy class of $\gamma$ relative to its endpoints \cite{F2,FS2}. Integration over closed loops determines a cohomology class $[\omega]\in H^1(X;\mathbb R)$. Conversely, for a finite CW complex every class $\xi\in H^1(X;\mathbb R)$ can be represented by a continuous closed one-form.

Let now $X$ be a finite CW complex and let $\xi\in H^1(X;\mathbb R)$. Fix a continuous closed one-form $\omega$ representing $\xi$. Farber's Lusternik--Schnirelmann category associated with $\xi$, further studied by Farber and Sch\"utz, is defined as follows \cite{F2,FS1}.

\begin{definition}
Let $N>0$. A subset $A\subset X$ is called \emph{$N$-movable with respect to $\omega$} if there exists a homotopy
\[
h_t\colon A\longrightarrow X,\qquad t\in[0,1],
\]
such that $h_0$ is the inclusion and, for every $x\in A$,
\[
\int_{\gamma_x}\omega\leq -N,
\]
where $\gamma_x(t)=h_t(x)$ for $t\in[0,1]$.
\end{definition}

Thus an $N$-movable subset can be continuously moved by an amount at least $N$ in the negative direction measured by $\omega$.

\begin{definition}[{\cite[Definition~5.3]{FS1}}]
The invariant $\cat(X,\xi)$ is the least integer $k\geq0$ such that, for every $N>0$, there exists an open cover
\[
X=F\cup F_1\cup\cdots\cup F_k
\]
such that $F$ is $N$-movable with respect to $\omega$ and each inclusion $F_i\hookrightarrow X$, $i=1,\dots,k$, is null-homotopic.
\end{definition}

The invariant $\cat(X,\xi)$ depends only on the cohomology class $\xi$ and not on the chosen representative $\omega$, and it is a homotopy invariant of the pair $(X,\xi)$ \cite{FS1}. If $\xi=0$, then $\cat(X,0)=\cat(X)$ \cite{FS1}. For a closed one-form $\omega$, we shall also write $\cat(X,[\omega])$.

For a broader account of Lusternik--Schnirelmann theory for closed one-forms and its applications to topology and dynamics, we refer to \cite{FS2}. Farber and Sch\"utz also studied the related invariant $\cat^1(X,\xi)$ and developed lower bounds using homological category weights \cite{FS3}. In this paper, we work exclusively with $\cat(X,\xi)$.

\subsection{Locally conformally symplectic manifolds}

We recall the basic definitions of locally conformally symplectic geometry; see, for example, \cite{B2}. Throughout this paper, we use the convention
\[
d\Omega=\theta\wedge\Omega.
\]

\begin{definition}
Let $M$ be a smooth manifold of dimension $2n$, where $n\geq2$. A \emph{locally conformally symplectic structure} on $M$ is a nondegenerate two-form $\Omega$ for which there exists a closed one-form $\theta$ satisfying
\[
d\Omega=\theta\wedge\Omega.
\]
The one-form $\theta$ is called the \emph{Lee form}, and the triple $(M,\Omega,\theta)$ is called a \emph{locally conformally symplectic manifold}, or an \emph{LCS manifold}.
\end{definition}

The Lee form is uniquely determined by $\Omega$ \cite{B2}. Its cohomology class $[\theta]\in H^1(M;\mathbb R)$ is called the \emph{Lee class}. If $\theta=0$, then $\Omega$ is symplectic. More generally, if $\theta$ is exact, say $\theta=df$, then
\[
d(e^{-f}\Omega)=0,
\]
and hence the LCS structure is globally conformally symplectic.

If $\Omega'=e^f\Omega$ for some smooth function $f$, then
\[
d\Omega'=(\theta+df)\wedge\Omega',
\]
so the corresponding Lee form is $\theta'=\theta+df$. In particular, the Lee class $[\theta]$ is invariant under conformal changes.

Associated with the Lee form is the twisted differential
\[
d_\theta=d-\theta\wedge.
\]
The LCS condition is equivalently written as $d_\theta\Omega=0$. The corresponding twisted, or Lichnerowicz, cohomology is an important tool in LCS geometry. In particular, Banyaga constructed examples of LCS forms which are not $d_\theta$-exact \cite{B1}. Related cohomological aspects of LCS structures have been studied further in \cite{AOT,LV}.

\subsection{LCS structures of the first and second kind}

Let $(M,\Omega,\theta)$ be an LCS manifold. We recall the distinction between LCS structures of the first and second kind; see \cite{B2,BM1}. Denote by
\[
\mathfrak X(M,\Omega)
=
\{X\in\mathfrak X(M)\mid \mathcal L_X\Omega=0\}
\]
the Lie algebra of infinitesimal automorphisms of the LCS structure. For every $X\in\mathfrak X(M,\Omega)$, the function $\theta(X)$ is constant on $M$. Hence the map
\[
\ell\colon\mathfrak X(M,\Omega)\longrightarrow\mathbb R,
\qquad
\ell(X)=\theta(X),
\]
is a Lie algebra homomorphism, called the \emph{Lee homomorphism} \cite{BM1}.

\begin{definition}\label{deffirstkind}
An LCS structure $(\Omega,\theta)$ is said to be \emph{of the first kind} if the Lee homomorphism $\ell$ is nonzero, equivalently, if it is surjective. It is said to be \emph{of the second kind} if $\ell$ is identically zero \cite{B2,BM1}.
\end{definition}

If an LCS structure is of the first kind, there exists $U\in\mathfrak X(M,\Omega)$ such that, after rescaling $U$, $\theta(U)=1$. Such a vector field is called an \emph{anti-Lee vector field} \cite{BM1}. In particular, the Lee form of an LCS structure of the first kind is nowhere vanishing. Consequently, if the Lee form has a zero, then the LCS structure is necessarily of the second kind.

\subsection{Ordinary LS category of LCS manifolds}

We record a simple observation showing that the ordinary LS category behaves quite differently for LCS manifolds than for symplectic manifolds.

\begin{proposition}\label{propcatlcs}
Let $n\geq2$. Then
\[
\{\cat(M)\mid M^{2n}\text{ is a closed LCS manifold}\}
=
\{3,\ldots,2n+1\}.
\]
Moreover, every value in this range is realized by an LCS manifold of the first kind.
\end{proposition}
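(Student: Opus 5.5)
The statement consists of the two inclusions between the sets, together with the first-kind realization. I would prove them as three separate steps.

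\emph{Upper bound.} For a closed $2n$-manifold $M$, the dimensional estimate recalled in the introduction gives $\cat(M)\le 2n+1$. No LCS input is needed here.

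\emph{Lower bound $\cat(M)\ge 3$.} Since $M$ is closed, $\cat(M)=1$ is impossible, because $M$ is not contractible. Suppose $\cat(M)=2$. It is classical that a closed manifold of (unnormalized) category $2$ is a homotopy sphere; see \cite{CLOT}. Then $H^1(M;\mathbb R)=0$, so $\theta=df$ is exact. As recalled in the preliminaries, $e^{-f}\Omega$ is then a symplectic form on $M$. This forces $[e^{-f}\Omega]^n\neq0$, which contradicts $H^2(M;\mathbb R)=0$ because $n\ge2$. Hence $\cat(M)\ge3$. The only external input is the homotopy-sphere characterization, and I would simply cite it.

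\emph{Realization by first-kind structures.} I would use the standard fact that if $(N^{2n-1},\alpha)$ is a closed contact manifold, then $S^1\times N$ with $\theta=dt$ and $\Omega=d\alpha-\theta\wedge\alpha$ is LCS. One checks $d\Omega=\theta\wedge\Omega$ directly. The vector field $\partial_t$ preserves $\Omega$ and has $\theta(\partial_t)=1$, so the structure is of the first kind. It therefore suffices to produce contact manifolds $N$ with $\cat(S^1\times N)$ taking every value $3,\dots,2n+1$. I would use two sources of contact manifolds.

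1. Bourgeois' theorem: if $N$ is closed contact, then so is $T^2\times N$.
2. The manifolds $S^{2m+1}$ (standard contact structure) and $S^1\times S^{2m}$ for $m\ge1$. The latter is the boundary of the subcritical Weinstein domain $S^1\times D^{2m+1}$, i.e.\ a ball with a $1$-handle attached, so it carries a contact structure.

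These give the following first-kind LCS manifolds, where $j\ge 0$:
\[
M=S^1\times T^{2j}\times S^{2n-1-2j}\quad (0\le j\le n-1),
\qquad
M=T^{2j+2}\times S^{2m}\quad (2m+1=2n-1-2j,\ 0\le j\le n-2).
\]
For $j=n-1$ the first family is simply $T^{2n}$, which is $S^1\times T^{2n-1}$ and visibly first kind.

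For each such product I would compute $\cat$ by combining two bounds. The cup-length gives $\cat\ge(\text{number of circle factors})+(\text{number of sphere factors})+1$. The product inequality $\cat(X\times Y)\le\cat(X)+\cat(Y)-1$, with $\cat(S^k)=2$ and $\cat(T^k)=k+1$, gives the same number as an upper bound. The first family therefore realizes $2j+3$, i.e.\ the odd values $3,5,\dots,2n+1$. The second family realizes $2j+4$, i.e.\ the even values $4,\dots,2n$.

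I expect the main obstacle to be justifying the contact structures on $S^1\times S^{2m}$ and on the $T^2$-stabilizations. These are existence results I would cite rather than prove, and the care needed is in checking that the dimension bookkeeping covers every value in $\{3,\dots,2n+1\}$ exactly once.
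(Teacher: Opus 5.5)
Your proposal is correct and follows the paper's proof essentially step for step. It uses the same manifolds $T^{k-2}\times S^{2n-k+2}=S^1\times N$ with $N$ made contact by Bourgeois' $T^2$-stabilization, the same LCS form $d\alpha-dt\wedge\alpha$ with anti-Lee field $\partial_t$, the same cup-length and product-inequality computation, and the same homotopy-sphere argument excluding $\cat(M)\le 2$. There are only two minor differences. You obtain the contact structure on $S^1\times S^{2m}$ from a Weinstein $1$-handle, whereas the paper writes down the explicit form $z\,ds+\sum_j(x_j\,dy_j-y_j\,dx_j)$. You also leave the nondegeneracy check $\Omega^n=-n\,dt\wedge\alpha\wedge(d\alpha)^{n-1}\neq0$ implicit, and you should state it.
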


\begin{proof}
Let $3\leq k\leq2n+1$ and set
\[
M_{n,k}=T^{k-2}\times S^{2n-k+2}
=S^1\times N_{n,k},
\qquad
N_{n,k}=T^{k-3}\times S^{2n-k+2}.
\]
We first show that the $(2n-1)$-dimensional manifold $N_{n,k}$ admits a contact form.

Suppose first that $k-3$ is even and $k<2n+1$. Then $N_{n,k}=T^{2a}\times S^{2b+1}$ for some $a\geq0$ and $b\geq1$. Starting from the standard contact structure on $S^{2b+1}$ and iterating Bourgeois' construction of contact structures on products with $T^2$, we obtain a contact structure on $N_{n,k}$ \cite{Bcontact}. If $k=2n+1$, then $N_{n,k}=T^{2n-1}$, which admits a contact structure by Bourgeois \cite{Bcontact}.

Suppose next that $k-3$ is odd. Then
\[
N_{n,k}=T^{2a}\times(S^1\times S^{2b})
\]
for some $a\geq0$ and $b\geq1$. The manifold $S^1\times S^{2b}$ admits the contact form
\[
\alpha_0
=
z\,ds+\sum_{j=1}^{b}(x_j\,dy_j-y_j\,dx_j),
\]
where $(x_1,y_1,\ldots,x_b,y_b,z)\in S^{2b}\subset\mathbb R^{2b+1}$. Iterating Bourgeois' construction again gives a contact structure on $N_{n,k}$. Thus in every case $N_{n,k}$ admits a contact form; let $\alpha$ be one.

If $t$ denotes the coordinate on the first $S^1$ factor of $M_{n,k}=S^1\times N_{n,k}$, set
\[
\theta=dt,
\qquad
\Omega=d\alpha-dt\wedge\alpha.
\]
Then
\[
d\Omega=dt\wedge d\alpha=\theta\wedge\Omega,
\qquad
\Omega^n=-n\,dt\wedge\alpha\wedge(d\alpha)^{n-1}\neq0.
\]
Hence $(\Omega,\theta)$ is an LCS structure on $M_{n,k}$. Moreover,
\[
\mathcal L_{\partial_t}\Omega=0,
\qquad
\theta(\partial_t)=1,
\]
so this LCS structure is of the first kind.

The standard cup-length lower bound and the product inequality for LS category give
\[
\cat(M_{n,k})
=
\cat\bigl(T^{k-2}\times S^{2n-k+2}\bigr)
=
k.
\]
Thus every integer $k$ with $3\leq k\leq2n+1$ is realized by a closed LCS manifold of the first kind.

It remains to exclude the values $1$ and $2$. A closed manifold of positive dimension is not contractible, and hence its LS category in our convention cannot be $1$. Moreover, in our unnormalized convention, a closed manifold of LS category $2$ is a homotopy sphere \cite{DKR}. An even-dimensional homotopy sphere of dimension at least four cannot admit an LCS structure. Indeed, since $H^1(M;\mathbb R)=0$, the Lee form of any LCS structure on $M$ would be exact, so the structure would be globally conformally symplectic. This would produce a symplectic form on $M$, which is impossible because $H^2(M;\mathbb R)=0$. Therefore $\cat(M)\geq3$ for every closed LCS manifold of dimension $2n\geq4$. Together with the dimensional upper bound $\cat(M)\leq2n+1$, this completes the proof.
\end{proof}

%% file: lscatoflee.tex
\section{Category and LCS structures of the first and second kind}

\subsection{LCS structures of the first kind}

As recalled in Section~2.3, the Lee form of an LCS structure of the first kind is nowhere vanishing. We therefore begin with the following general observation.

\begin{proposition}
Let $M$ be a closed manifold and let $\omega$ be a nowhere-vanishing closed one-form. Then
\[
\cat(M,[\omega])=0.
\]
\end{proposition}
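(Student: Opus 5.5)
The plan is to show that, with $k=0$ in the definition of $\cat(M,[\omega])$, the whole manifold $F=M$ is $N$-movable for every $N>0$. Then $M=F$ is an admissible open cover with no categorical pieces, and the least admissible integer is $0$. The natural way to move $M$ is along the flow of a vector field on which $\omega$ is uniformly negative.

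First, I choose a Riemannian metric $g$ on $M$ and let $V$ be the vector field dual to $-\omega$, so that $g(V,\cdot)=-\omega$. Then
\[
\omega(V)=-|\omega|_g^2 .
\]
Since $\omega$ is nowhere vanishing and $M$ is compact, there is a constant $c>0$ with $|\omega|_g^2\geq c$ everywhere. Compactness also makes $V$ complete, so it has a global flow $\varphi_s$.

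Next, given $N>0$, I fix $T\geq N/c$ and define
\[
h_t=\varphi_{tT}\colon M\to M,\qquad t\in[0,1].
\]
This is a homotopy from the identity. For each $x\in M$ the path $\gamma_x(t)=\varphi_{tT}(x)$ is smooth. Because the smooth closed form $\omega$ defines a continuous closed one-form through its local primitives, the integral $\int_{\gamma_x}\omega$ agrees with the ordinary line integral. Hence
\[
\int_{\gamma_x}\omega=\int_0^T\omega\bigl(V(\varphi_s(x))\bigr)\,ds\leq -cT\leq -N.
\]
So $F=M$ is $N$-movable for every $N$, which gives $\cat(M,[\omega])=0$.

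The only point that needs care is checking that Farber's path integral of a continuous closed one-form coincides with the usual integral when $\omega$ is smooth. On each piece of a subdivision this is the fundamental theorem of calculus applied to a local primitive, so I expect no real obstacle. The argument also uses only that the representative $\omega$ is nowhere vanishing, which is consistent with the invariant depending only on the class $[\omega]$.
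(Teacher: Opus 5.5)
Your proposal is correct and is essentially the paper's argument: you flow along a vector field on which $\omega$ is uniformly negative, which shows that $F=M$ is $N$-movable for every $N>0$. The only difference is cosmetic: the paper normalizes to $V=-\omega^\sharp/|\omega^\sharp|^2$, so that $\omega(V)=-1$ and it flows for time $N$, whereas you take $V=-\omega^\sharp$, use the compactness bound $|\omega|_g^2\geq c>0$, and flow for time $T\geq N/c$.
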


\begin{proof}
Choose a Riemannian metric on $M$ and let $\omega^\sharp$ be the vector field dual to $\omega$. Since $\omega$ is nowhere vanishing, the vector field
\[
V=-\frac{\omega^\sharp}{|\omega^\sharp|^2}
\]
is well defined and satisfies $\omega(V)=-1$. Since $M$ is closed, the flow $\varphi_t$ of $V$ is defined for all $t\in\mathbb R$.

For each $N>0$, define
\[
h_t(x)=\varphi_{Nt}(x),
\qquad x\in M,\quad t\in[0,1].
\]
Then $h_0$ is the identity, and for the path $\gamma_x(t)=h_t(x)$ we have
\[
\int_{\gamma_x}\omega
=
\int_0^1 \omega(NV)\,dt
=
-N.
\]
Thus the whole manifold $M$ is $N$-movable with respect to $\omega$ for every $N>0$. Hence $\cat(M,[\omega])=0$.
\end{proof}

\begin{corollary}
Let $(M,\Omega,\theta)$ be a closed LCS manifold of the first kind. Then
\[
\cat(M,[\theta])=0.
\]
\end{corollary}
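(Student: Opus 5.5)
The corollary follows by combining the preceding proposition with the structural fact about first-kind LCS structures recalled in Section~2.3. Let $(M,\Omega,\theta)$ be a closed LCS manifold of the first kind. By Definition~\ref{deffirstkind}, the Lee homomorphism $\ell$ is nonzero. So there is an infinitesimal automorphism $U\in\mathfrak X(M,\Omega)$ with $\theta(U)$ a nonzero constant. Rescaling $U$ gives an anti-Lee vector field with $\theta(U)=1$ identically on $M$.

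The first step is to deduce that $\theta$ is nowhere vanishing. At each point $x\in M$ we have $\theta_x(U_x)=1\neq0$, so $\theta_x\neq0$. The only input used here is that $\theta(X)$ is constant for every $X\in\mathfrak X(M,\Omega)$, which is recalled before Definition~\ref{deffirstkind}. It comes from the identity $\mathcal L_X\Omega=0$ together with $d\Omega=\theta\wedge\Omega$ and nondegeneracy when $n\geq2$, and I take it as given from \cite{BM1}.

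The second step is to apply the preceding proposition to the closed one-form $\omega=\theta$ on the closed manifold $M$. This gives $\cat(M,[\theta])=0$ directly. The invariant depends only on the cohomology class, so no choice of representative is involved.

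I do not expect any real obstacle. The one point to state carefully is that ``first kind'' produces a vector field on which $\theta$ takes a constant nonzero value. This needs the constancy of $\theta(X)$ for automorphisms, and that constancy is cited rather than reproved.
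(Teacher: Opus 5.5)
Your proposal is correct and follows the paper's proof: first-kind structures have a nowhere-vanishing Lee form, so the preceding proposition gives $\cat(M,[\theta])=0$. You spell out the nonvanishing step via the anti-Lee vector field with $\theta(U)=1$, which the paper only recalls from Section~2.3.
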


\begin{proof}
The Lee form of an LCS structure of the first kind is nowhere vanishing, so the preceding proposition applies.
\end{proof}

\subsection{LCS structures of the second kind}

The preceding vanishing result immediately yields an obstruction to the existence of an LCS structure of the first kind with a prescribed Lee class.

\begin{proposition}
Let $(M,\Omega,\theta)$ be a closed LCS manifold. If
\[
\cat(M,[\theta])>0,
\]
then $(\Omega,\theta)$ is of the second kind.
\end{proposition}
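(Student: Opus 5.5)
The statement is the contrapositive of the preceding corollary, combined with the dichotomy in Definition~\ref{deffirstkind}. I would argue by contradiction. Suppose $\cat(M,[\theta])>0$ but $(\Omega,\theta)$ is not of the second kind. Then the Lee homomorphism $\ell\colon\mathfrak X(M,\Omega)\to\mathbb R$ is not identically zero. Since $\ell$ is a linear map to $\mathbb R$, being nonzero is the same as being surjective, so by Definition~\ref{deffirstkind} the structure is of the first kind. The dichotomy between first and second kind is therefore exhaustive, and no case analysis beyond this is needed.

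Next I would invoke the corollary for closed LCS manifolds of the first kind, which gives $\cat(M,[\theta])=0$. Spelling this out: by Section~2.3 there is an anti-Lee vector field $U$ with $\theta(U)=1$, so $\theta$ is nowhere vanishing. The proposition on nowhere-vanishing closed one-forms then applies. It flows along a rescaled gradient-type field so that the whole of $M$ is $N$-movable for every $N>0$. The conclusion $\cat(M,[\theta])=0$ contradicts the hypothesis $\cat(M,[\theta])>0$, so the structure must be of the second kind.

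I would add one remark. The invariant depends only on the class $[\theta]$, by the representative-independence of $\cat(X,\xi)$ recalled in Section~2.1. Hence the argument shows more: every LCS structure on $M$ whose Lee class equals $[\theta]$ is of the second kind.

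There is no real obstacle here. The only point needing care is the fact, already recorded before Definition~\ref{deffirstkind}, that $\theta(X)$ is constant for $X\in\mathfrak X(M,\Omega)$. This is what makes $\ell$ well defined and turns a nonzero value into a nowhere-vanishing Lee form. It follows from $\mathcal L_X\Omega=0$ via $d\Omega=\theta\wedge\Omega$ and nondegeneracy, and I would take it as given from the cited literature.
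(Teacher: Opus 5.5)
Your proposal is correct and follows the paper's route: the paper proves this in one line as the contrapositive of the preceding corollary (first kind implies $\cat(M,[\theta])=0$). Your remark that the linear map $\ell$ is either zero or surjective simply makes explicit the exhaustive first/second-kind dichotomy that the paper's proof uses implicitly.
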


\begin{proof}
This follows immediately from the preceding corollary.
\end{proof}

Thus positivity of $\cat(M,[\theta])$ is an obstruction, depending only on the Lee class, to the existence of an LCS structure of the first kind with that Lee class. In the following sections, we obtain examples and cohomological lower bounds for this invariant, and later apply them to LCS blow-ups.

%% file: LCSmanifoldswithpositiveLeecategory.tex
\section{Examples with positive category}

\subsection{Products of surfaces}

We first consider products of closed orientable surfaces. Let
\[
M^{2k}=\Sigma_{g_1}\times\cdots\times\Sigma_{g_k},
\qquad k\geq2,
\]
where $g_i>1$ for every $i$. Let $\xi\in H^1(M;\mathbb R)$, and denote by $\xi_i$ the restriction of $\xi$ to the $i$-th factor. Set
\[
r(\xi)=\#\{i\in\{1,\ldots,k\}\mid \xi_i=0\}.
\]
Farber and Sch\"utz computed the category explicitly for such products \cite[Theorem~17]{FS1}.

\begin{theorem}[Farber--Sch\"utz]
Let $M^{2k}=\Sigma_{g_1}\times\cdots\times\Sigma_{g_k}$ with $g_i>1$. Then, for every $\xi\in H^1(M;\mathbb R)$,
\[
\cat(M,\xi)=1+2r(\xi).
\]
In particular, if $\xi_i\neq0$ for every $i$, then $\cat(M,\xi)=1$.
\end{theorem}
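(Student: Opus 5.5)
We prove the two inequalities $\cat(M,\xi)\geq 1+2r(\xi)$ and $\cat(M,\xi)\leq 1+2r(\xi)$ separately.

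\textbf{Setup.} After reordering the factors, assume $\xi_i=0$ exactly for $i\leq r=r(\xi)$. Write $M=P\times Q$ with
\[
P=\Sigma_{g_1}\times\cdots\times\Sigma_{g_r},\qquad Q=\Sigma_{g_{r+1}}\times\cdots\times\Sigma_{g_k}.
\]
By the K\"unneth formula in degree one, $\xi=\mathrm{pr}_Q^*\eta$, where $\eta\in H^1(Q;\mathbb R)$ restricts nontrivially to every factor of $Q$. Since $\cat(X,\xi)$ is a homotopy invariant of the pair, we may choose the representative $\omega$ freely.

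\textbf{Upper bound.} We aim for a product inequality of the form
\[
\cat(P\times Q,\mathrm{pr}_Q^*\eta)\leq \cat(P)-1+\cat(Q,\eta),
\]
with $\cat(P)=2r+1$ because $P$ is aspherical of dimension $2r$. It then suffices to show $\cat(Q,\eta)\leq 1$. Since $Q$ has positive dimension, $\cat(Q,\eta)$ cannot be $0$ by this route alone, so the target really is $1$, which forces
\[
\cat(M,\xi)\leq 2r+1.
\]
To get $\cat(Q,\eta)\leq1$, I would use a Morse-theoretic construction. Take a closed one-form on each surface $\Sigma_{g_i}$ ($i>r$) representing $\xi_i$, with zeros only in a small disk $D_i$; this is possible when $\xi_i\neq0$. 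Their sum $\omega_Q$ has zeros only in the small ball $D=\prod D_i$. Outside a neighbourhood of $D$ the negative gradient flow decreases $\int\omega_Q$ unboundedly, except along trajectories trapped near $D$. Handling those trapped trajectories is the delicate point. Farber's argument shows that the set of points whose flow lines stay trapped can be enlarged to a contractible neighbourhood, essentially a ball around $D$ thickened by flowing. This uses that $\eta\neq0$ on each factor, so that trapped orbits come only from the zero set. The product inequality itself is proved by taking products of the $N$-movable and categorical pieces and rearranging, as in the ordinary product formula.

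\textbf{Lower bound.} This uses Farber--Sch\"utz's cohomological estimate: if there are classes $u_1,\dots,u_m\in H^{>0}(M;\mathbb C)$ with
\[
u_1\smile\cdots\smile u_m\neq0\quad\text{in } H^*(M;\mathcal L_t)
\]
for a generic complex rank-one local system $\mathcal L_t$ with monodromy $t^{\xi}$, then $\cat(M,\xi)\geq m$. Take
\[
\mu_Q\in H^{\mathrm{top}}(Q;\mathcal L_t),
\]
which I expect to be nonzero: for a generic $t$, the twisted Euler characteristic of $Q$ equals $\chi(Q)\neq0$, since $g_i>1$, and twisted cohomology is concentrated in the middle degree. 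Take $2r$ degree-one classes from $H^1(P)$ whose product is the fundamental class of $P$. Pulled back and multiplied by $\mathrm{pr}_Q^*\mu_Q$, they give a nonzero class in $H^*(M;\mathcal L_t)$ by the twisted K\"unneth formula. Together with $\mu_Q$ this is $2r+1$ factors, so
\[
\cat(M,\xi)\geq 2r+1.
\]

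\textbf{Main obstacle.} The hardest step is the upper bound $\cat(Q,\eta)\leq1$, specifically controlling the trapped trajectories near the zeros.
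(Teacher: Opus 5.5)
The paper gives no proof of this statement; it quotes it from Farber--Sch\"utz (their Theorem~17). So the question is whether your argument stands on its own. The lower bound essentially does. The upper bound has a genuine gap at its central step.

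\textbf{Lower bound.} This is the natural route, but two points need correcting. First, the Farber--Sch\"utz estimate (their Theorem~7, recalled in Section~5 here) takes one twisted class $v_0\in H^*(M;L)$ together with untwisted classes $v_1,\dots,v_m\in H^{>0}(M;\mathbb C)$, and concludes $\cat(M,\xi)>m$. A product of untwisted classes is not an element of $H^*(M;L)$. Second, the twisted class on $Q$ cannot live in top degree. For $i>r$, the restriction $L_i$ of a transcendental $L$ to $\Sigma_{g_i}$ has nontrivial monodromy, so $H^{2(k-r)}(Q;L)\cong H_0(Q;L)=0$ when $r<k$. The nonzero twisted cohomology sits in degree $k-r$, as your own remark about the middle degree indicates. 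Take $\mu_Q$ to be the cross product of nonzero classes in the groups $H^1(\Sigma_{g_i};L_i)$, each of which has dimension $2g_i-2$. Since $L=\mathrm{pr}_Q^*(L|_Q)$, K\"unneth gives $\mathrm{pr}_Q^*\mu_Q\smile\mathrm{pr}_P^*(u_1\smile\cdots\smile u_{2r})\neq0$, and hence $\cat(M,\xi)>2r$.

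\textbf{Upper bound.} Your reduction via the product inequality, moving only in the $Q$-direction, is fine. The gap is the claim $\cat(Q,\eta)\le 1$, which carries the entire content of the upper bound. You assert it by appeal to an unspecified ``Farber's argument'', and the mechanism you describe cannot be what makes it true.
\begin{itemize}
\item Putting the zeros in a small ball proves nothing. The zeros of any closed one-form can be isotoped into an arbitrarily small ball, whatever its class. This includes classes vanishing on one factor of $\Sigma_g\times\Sigma_h$, where the theorem itself gives $\cat=3$.
\item ``Trapped orbits come only from the zero set'' holds for every closed one-form, so it does not use $\eta\neq0$ on each factor.
\item The real difficulty is dynamical. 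Points that fail to descend by $N$ are not near $D$; they lie near the stable sets of the zeros, which reach far across the manifold.
\item The backward flow-out of a neighbourhood of the zeros, within height $N$, is null-homotopic only if no orbit leaves that neighbourhood and re-enters it within that window. Otherwise the flow-out contains a loop of nonzero $\xi$-period. Nothing in your construction rules out such returning (homoclinic) orbits.
\end{itemize}

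\textbf{How to close the gap.} Work factor by factor and use the monotonicity of the product flow.
\begin{enumerate}
\item On each $\Sigma_{g_i}$ with $i>r$, choose a representative whose zeros are all nondegenerate saddles, for example one obtained from the harmonic representative, whose local primitives have no extrema. This is exactly where $\xi_i\neq0$ enters: when $\xi_i=0$, every representative is exact and the whole factor is trapped.
\item Choose a gradient-like field on that factor without saddle connections.
\item For a given $N$ and large $T$, the points that do not descend by $N$ within time $T$ lie in a thin neighbourhood $B_i$ of the saddles together with their stable separatrices truncated at height about $N$. These form finitely many disjoint compact arcs, so $B_i$ is a disjoint union of discs.
\item For the product field, each coordinate integral is nonincreasing along flow lines. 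So a point of $Q$ that fails to descend by $N$ fails in every coordinate, and therefore lies in $\prod_i B_i$. This is a disjoint union of balls, hence null-homotopic in $Q$.
\end{enumerate}
This gives $Q=F\cup\prod_iB_i$ with $F$ $N$-movable by the flow, so $\cat(Q,\eta)\le1$.
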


Since each $\Sigma_{g_i}$ is symplectic, the product $M$ admits a symplectic form. Let $0\neq\xi\in H^1(M;\mathbb R)$ and choose a closed one-form $\theta$ representing $\xi$. Since $\theta$ is not exact, the existence theorem of Bertelson and Meigniez \cite[Theorem~A]{BM2} gives an LCS form $\Omega$ satisfying
\[
d\Omega=\theta\wedge\Omega.
\]
Thus every nonzero cohomology class in $H^1(M;\mathbb R)$ can be realized as the Lee class of an LCS structure.

Combining this existence result with the computation of Farber and Sch\"utz gives the following.

\begin{corollary}
Let $M^{2k}=\Sigma_{g_1}\times\cdots\times\Sigma_{g_k}$, where $k\geq2$ and $g_i>1$, and let $0\neq\xi\in H^1(M;\mathbb R)$. Then $M$ admits an LCS structure with Lee class $\xi$, and
\[
\cat(M,\xi)=1+2r(\xi).
\]
Consequently, every LCS structure on $M$ whose Lee class is $\xi$ is of the second kind.
\end{corollary}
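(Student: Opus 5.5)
The plan is to assemble three ingredients already available in the text, so there is little new to prove.

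\emph{Existence of an LCS structure with Lee class $\xi$.} Each $\Sigma_{g_i}$ carries an area form, so $M$ is a closed symplectic manifold of dimension $2k\geq4$. Fix a closed one-form $\theta$ with $[\theta]=\xi$. Since $\xi\neq0$, $\theta$ is not exact, and the Bertelson--Meigniez existence theorem \cite[Theorem~A]{BM2} produces a nondegenerate two-form $\Omega$ with $d\Omega=\theta\wedge\Omega$. The one point to check is that the hypotheses of \cite[Theorem~A]{BM2} hold. These are that $M$ is closed, of dimension at least four, and carries a nondegenerate two-form, i.e.\ an almost symplectic structure, with a prescribed non-exact closed one-form. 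The symplectic form supplies the almost symplectic structure, so this is automatic here.

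\emph{The value of the category.} The equality $\cat(M,\xi)=1+2r(\xi)$ is exactly the Farber--Sch\"utz theorem stated just above, applied to the class $\xi$. This uses that $\cat(M,\xi)$ depends only on $\xi$, so the particular representative $\theta$ does not matter.

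\emph{Second kind.} Let $(\Omega',\theta')$ be any LCS structure on $M$ with $[\theta']=\xi$. Then $\cat(M,[\theta'])=\cat(M,\xi)=1+2r(\xi)\geq1>0$. The proposition of Section~3 asserting that positive category forces the second kind then shows that $(\Omega',\theta')$ is of the second kind. Equivalently, by the corollary that first-kind structures have $\cat(M,[\theta])=0$, no structure of the first kind can have Lee class $\xi$.

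There is no genuine obstacle. The only step needing care is matching the hypotheses of the cited existence theorem, and everything else is a direct citation combined with the observation that $1+2r(\xi)\geq1$.
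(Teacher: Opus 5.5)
Your proposal is correct and follows essentially the same route as the paper: Bertelson--Meigniez Theorem~A for existence, with the product symplectic form as the nondegenerate two-form, and Farber--Sch\"utz Theorem~17 for the value of $\cat(M,\xi)$. The second-kind conclusion then follows, as in the paper, from $1+2r(\xi)>0$ together with the Section~3 proposition that positive category forces the second kind.
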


\begin{proof}
The existence of an LCS structure with Lee class $\xi$ follows from \cite[Theorem~A]{BM2}, and the formula for $\cat(M,\xi)$ follows from \cite[Theorem~17]{FS1}. Since $\cat(M,\xi)=1+2r(\xi)>0$, the result of the previous section implies that every LCS structure with Lee class $\xi$ is of the second kind.
\end{proof}

In dimension four, let $M=\Sigma_g\times\Sigma_h$, where $g,h>1$. For a nonzero class $\xi\in H^1(M;\mathbb R)$, there are two possibilities. If the restrictions of $\xi$ to both factors are nonzero, then $r(\xi)=0$ and $\cat(M,\xi)=1$. If $\xi$ vanishes on exactly one factor, then $r(\xi)=1$ and $\cat(M,\xi)=3$. Thus the category distinguishes different nonzero cohomology classes on the same underlying four-dimensional manifold.

\subsection{A four-dimensional example}

We next give a four-dimensional example which is not a product of surfaces. Let
\[
M_4=(S^1\times S^3)\#\mathbb{CP}^2\#\mathbb{CP}^2.
\]
We have
\[
H^1(M_4;\mathbb R)\cong\mathbb R,
\qquad
H^2(M_4;\mathbb Z)\cong\mathbb Ze_1\oplus\mathbb Ze_2,
\]
where the intersection form satisfies $e_1^2=e_2^2=1$ and $e_1e_2=0$. Moreover, $\chi(M_4)=2$ and $\sigma(M_4)=2$.

The second Stiefel--Whitney class satisfies
\[
w_2(M_4)\equiv e_1+e_2\pmod2.
\]
Consider
\[
c=e_1+3e_2\in H^2(M_4;\mathbb Z).
\]
Then $c\equiv w_2(M_4)\pmod2$ and
\[
c^2=1+9=10=2\chi(M_4)+3\sigma(M_4).
\]
By the standard criterion for almost complex structures on closed oriented four-manifolds \cite{GS}, $M_4$ admits an almost complex structure. In particular, it admits a nondegenerate two-form.

\begin{proposition}
For every nonzero class $\xi\in H^1(M_4;\mathbb R)$, there exists an LCS structure on $M_4$ with Lee class $\xi$. Moreover,
\[
\cat(M_4,\xi)>0,
\]
and consequently every LCS structure on $M_4$ whose Lee class is $\xi$ is of the second kind.
\end{proposition}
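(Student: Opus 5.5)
Existence comes first. As the paragraph before the statement shows, $M_4$ carries an almost complex structure and hence a nondegenerate two-form. Since $H^1(M_4;\mathbb R)\cong\mathbb R$, any nonzero $\xi$ is represented by a non-exact closed one-form $\theta$. I would apply the existence theorem of Bertelson and Meigniez \cite[Theorem~A]{BM2} exactly as in the product-of-surfaces case. That theorem gives an LCS form $\Omega$ with $d\Omega=\theta\wedge\Omega$, provided there is a nondegenerate two-form and the prescribed Lee form is non-exact. This produces an LCS structure with Lee class $\xi$.

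For positivity of $\cat(M_4,\xi)$, the quickest route is the Euler characteristic, which avoids any delicate category computation. Suppose $\cat(M_4,\xi)=0$. By the definition of Farber's category, for every $N>0$ the whole space $M_4$ is then $N$-movable, i.e.\ there is a homotopy $h_t\colon M_4\to M_4$ starting at the identity with $\int_{\gamma_x}\omega\le -N$ for all $x$.

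I would rule this out with a fixed-point argument. Since $h_1$ is homotopic to the identity, its Lefschetz number is $L(h_1)=\chi(M_4)=2\neq0$, so $h_1$ has a fixed point $x_0$. The path $\gamma_{x_0}$ is then a loop at $x_0$, and its integral equals $\langle\xi,[\gamma_{x_0}]\rangle$.

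The main obstacle is that this single loop integral need not be bounded below, since the loop may wind many times. A plain fixed point therefore does not immediately give a contradiction. Here I would instead invoke the known result of Farber (see \cite{F2,FS2}) that $\cat(X,\xi)=0$ forces the Novikov homology to vanish, and hence forces $\chi(X)=0$. Equivalently, the Euler characteristic equals the Novikov Euler characteristic, which is zero when everything is movable. If one prefers a self-contained route, I would use a cohomological lower bound of Farber--Sch\"utz type, which detects nonzero Novikov homology. For $M_4$, the Novikov Betti numbers sum with alternating signs to $\chi(M_4)=2$, so some Novikov homology group is nonzero. That gives $\cat(M_4,\xi)\ge1$.

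Finally, positivity together with the proposition of the previous section shows that every LCS structure with Lee class $\xi$ is of the second kind.
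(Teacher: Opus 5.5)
Your proposal is correct and follows the paper's proof. Existence comes from Bertelson--Meigniez Theorem~A via the nondegenerate two-form, and positivity comes from the Farber--Sch\"utz fact that $\cat(X,\xi)=0$ forces $\chi(X)=0$ (the paper cites \cite[Theorem~10]{FS1}), applied to $\chi(M_4)=2$. The abandoned fixed-point paragraph can simply be dropped, and your ``self-contained'' route is just the Section~5 corollary applied to a transcendental rank-one $L$, using $\chi(M_4;L)=\chi(M_4)=2\neq0$.
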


\begin{proof}
Choose a closed one-form $\theta$ representing $\xi$. Since $\xi\neq0$, the form $\theta$ is not exact. As $M_4$ admits a nondegenerate two-form, the theorem of Bertelson and Meigniez \cite[Theorem~A]{BM2} gives an LCS form $\Omega$ satisfying
\[
d\Omega=\theta\wedge\Omega.
\]
On the other hand, $\chi(M_4)=2\neq0$. Farber and Sch\"utz proved that
\[
\cat(X,\xi)=0\quad\Longrightarrow\quad\chi(X)=0
\]
\cite[Theorem~10]{FS1}. It follows that $\cat(M_4,\xi)>0$. The result of the previous section therefore implies that every LCS structure on $M_4$ with Lee class $\xi$ is of the second kind.
\end{proof}

Since $M_4$ is a closed four-manifold and $\xi\neq0$, the general upper bound of Farber and Sch\"utz \cite[Theorem~11 and Equation~(37)]{FS1} gives
\[
1\leq\cat(M_4,\xi)\leq3.
\]

%% file: lowerbound.tex
\section{Cohomological lower bounds}

In this section, we recall the cohomological lower bound for the category associated with a closed one-form due to Farber and Sch\"utz \cite{FS1}. This result will play a central role in the study of LCS blow-ups in the next section.

Let $X$ be a finite connected CW complex and let $\xi\in H^1(X;\mathbb R)$. Set
\[
H=H_1(X;\mathbb Z)/\ker\xi,
\]
where $\xi$ is regarded as a homomorphism $H_1(X;\mathbb Z)\to\mathbb R$. Let $\mathcal V_\xi$ denote the set of complex flat line bundles on $X$ whose monodromy is trivial on $\ker\xi$. For $L\in\mathcal V_\xi$, its monodromy induces a ring homomorphism
\[
\operatorname{Mon}_L\colon\mathbb Z[H]\longrightarrow\mathbb C.
\]

\begin{definition}[{\cite[Definition~6.1]{FS1}}]
A flat line bundle $L\in\mathcal V_\xi$ is called \emph{transcendental} if the homomorphism
\[
\operatorname{Mon}_L\colon\mathbb Z[H]\longrightarrow\mathbb C
\]
is injective.
\end{definition}

If $\xi\neq0$, transcendental flat line bundles in $\mathcal V_\xi$ exist; one may choose the monodromies of a basis of the free abelian group $H$ to be algebraically independent over $\mathbb Q$.

The following cohomological estimate is due to Farber and Sch\"utz.

\begin{theorem}[Farber--Sch\"utz {\cite[Theorem~7]{FS1}}]
Let $X$ be a finite CW complex and let $\xi\in H^1(X;\mathbb R)$. Suppose that $L\in\mathcal V_\xi$ is transcendental and that there exist classes
\[
v_0\in H^{d_0}(X;L),
\qquad
v_i\in H^{d_i}(X;\mathbb C),
\quad i=1,\ldots,k,
\]
with $d_i>0$ for $i=1,\ldots,k$, such that
\[
v_0\smile v_1\smile\cdots\smile v_k\neq0.
\]
Then
\[
\cat(X,\xi)>k.
\]
\end{theorem}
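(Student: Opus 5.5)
The plan is to argue by contradiction, in the spirit of the classical cup-length argument. Suppose $\cat(X,\xi)\leq k$. Then for every $N>0$ there is an open cover
\[
X=F\cup F_1\cup\cdots\cup F_k
\]
in which $F$ is $N$-movable with respect to a fixed representative $\omega$ of $\xi$ and each $F_i\hookrightarrow X$ is null-homotopic.

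For $i\geq1$, the class $v_i$ has positive degree, so its restriction to $F_i$ vanishes. The long exact sequence of the pair therefore lifts $v_i$ to a class
\[
\bar v_i\in H^{d_i}(X,F_i;\mathbb C).
\]
Since the sets are open, the relative cup product is defined, and $v_1\smile\cdots\smile v_k$ lifts to
\[
H^{*}(X,F_1\cup\cdots\cup F_k;\mathbb C).
\]

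The remaining, and main, step is to show that for $N$ large enough the restriction $v_0|_F\in H^{d_0}(F;L)$ vanishes. Granting this, $v_0$ lifts to $\bar v_0\in H^{d_0}(X,F;L)$. The full product $v_0\smile v_1\smile\cdots\smile v_k$ then lifts to
\[
H^{*}(X,F\cup F_1\cup\cdots\cup F_k;L)=H^{*}(X,X;L)=0,
\]
which contradicts the hypothesis that the product is nonzero.

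To prove the vanishing on $F$, I would pass to the free abelian covering $\widetilde X\to X$ with deck group $H$, on which $\omega$ pulls back to $df$ for a function $f$. The twisted cochains are
\[
C^*(X;L)=\operatorname{Hom}_{\mathbb Z[H]}\bigl(C_*(\widetilde X),\mathbb C\bigr),
\]
with $\mathbb C$ a $\mathbb Z[H]$-module via $\operatorname{Mon}_L$. Since $L$ is transcendental, $\operatorname{Mon}_L$ extends to an embedding of the fraction field $Q(H)$ into $\mathbb C$, and so $H^*(X;L)\cong H^*\bigl(C^*(\widetilde X)\otimes Q(H)\bigr)\otimes_{Q(H)}\mathbb C$.

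The homotopy $h_t$ lifts to $\widetilde X$ and moves $\widetilde F$ into the region where $f$ has decreased by at least $N$. At the chain level, the inclusion $C_*(\widetilde F)\to C_*(\widetilde X)$ is therefore chain homotopic to a map whose image consists of chains supported far down in the $f$-direction. Because $X$ is a finite complex, the cells of $\widetilde X$ lie within bounded $f$-distance of a fundamental domain. Hence, for $N$ larger than a constant depending only on the CW structure, such chain maps are divisible by elements of the augmentation-type ideal in $\mathbb Z[H]$ "in the negative direction". Here I would try to imitate Farber's Novikov-ring argument: with $\xi$-negative elements inverted, one shows the restriction factors through a module on which multiplication by some $1-g$ ($g\in H$ with $\xi(g)<0$) acts both invertibly and trivially, so that after tensoring with $Q(H)$, and thus with $\mathbb C$ via the transcendental $L$, it becomes zero.

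Making this divisibility and factoring precise is the step I expect to be the hardest. There I would follow \cite{F2,FS1} closely, using the fact that transcendence makes every nonzero element of $\mathbb Z[H]$ act invertibly on $\mathbb C$.
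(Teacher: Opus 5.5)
The paper quotes this theorem from Farber--Sch\"utz and gives no proof of it, so I judged your outline directly. The relative lifting of $v_1,\dots,v_k$ and the reduction to $Q(H)$-coefficients are fine. The step you single out as the main one, however, is not just hard: as stated it is \emph{false}.

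\textbf{Why the key step fails.} $N$-movability of $F$, for any $N$, does not force $v_0|_F=0$. The homotopy lowers each point only relative to its \emph{own} height, so a tall cycle in $\widetilde F$ can be pushed down by $N$ without leaving its homology class. Here is a counterexample.
\begin{itemize}
\item On $\Sigma_2$ take $\xi(b_1)=1$ and $\xi(a_1)=\xi(a_2)=\xi(b_2)=0$. In the infinite cyclic cover, the lift $\tilde a_2$ spans a free $\mathbb Q[t^{\pm1}]$-summand of $H_1$. Hence $H^1(\Sigma_2;L)\to H^1(a_2;\mathbb C)$ is nonzero for transcendental $L$.
\item Finger-move $a_2$ along an embedded path in the first handle that winds $A$ times in the $b_1$-direction. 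The result is an embedded curve $\gamma\simeq a_2$ whose lift $\tilde\gamma$ reaches height $A$ above $\tilde a_2$.
\item Homotope $\tilde\gamma$ to the loop that traverses $\tilde a_2$ while $\tilde\gamma$ is above height $N+1$, and otherwise sits on a descending ray at height $\le f(\tilde\gamma(s))-N$. Both loops are freely homotopic to $\tilde a_2$, so this homotopy exists.
\item Hence an annular neighbourhood $F$ of $\gamma$ is $N$-movable for $N$ up to roughly $A$, which is arbitrarily large. Yet $v_0|_F\neq0$, because $F\hookrightarrow\Sigma_2$ is homotopic to $a_2$.
\end{itemize}
Your mechanism, in which some $1-g$ acts both invertibly and trivially, requires iterating the push. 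That is possible only when $F=X$, i.e.\ in the case $k=0$.

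\textbf{What repairs the argument.} Localize $v_0$ on a cycle of bounded height, and let $N$ depend on that cycle rather than only on the CW structure.
\begin{itemize}
\item Put $u=v_1\smile\cdots\smile v_k$ with its lift $\bar u\in H^*(X,F')$, where $F'=F_1\cup\cdots\cup F_k$. Pick $z\in H_*(X;L^\vee)$ with $\langle v_0\smile u,z\rangle\neq0$. By flatness of $\mathbb C$ over $\mathbb Q[H]$, you may take $z$ to come from a finite cycle $\tilde z$ in the $H$-cover.
\item Subdivide $\tilde z=\tilde z_F+\tilde z_{F'}$. Then $w=\bar u\cap\tilde z_F$ (with $\bar u$ lifted) is a cycle in $\widetilde F$ supported in $\operatorname{supp}\tilde z$. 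Its class $c\in H_{d_0}(\widetilde X;\mathbb Q)$ is independent of the cover and satisfies $\langle v_0,c\otimes1\rangle\neq0$.
\item Transcendence makes $c$ non-torsion, hence nonzero in Novikov homology. Boundaries form a closed subspace of the finite-dimensional Novikov chain complex. So every cycle representing $c$ has $\max f\ge\tau(c)$ for some finite $\tau(c)$.
\item The lifted homotopy pushes $w$ to a representative with $\max f\le\max_{\operatorname{supp}\tilde z}f-N$. This is a contradiction once $N$ exceeds $\max_{\operatorname{supp}\tilde z}f-\tau(c)$ plus a constant from cellular approximation.
\end{itemize}
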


Since $\cat(X,\xi)$ is integer-valued, this is equivalent to $\cat(X,\xi)\geq k+1$. For $k=0$, we obtain the following useful criterion.

\begin{corollary}
Let $X$ be a finite CW complex and let $\xi\in H^1(X;\mathbb R)$. If there exists a transcendental flat line bundle $L\in\mathcal V_\xi$ such that
\[
H^*(X;L)\neq0,
\]
then
\[
\cat(X,\xi)>0.
\]
\end{corollary}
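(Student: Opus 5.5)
This is the $k=0$ case of the Farber--Sch\"utz estimate (Theorem~7 of \cite{FS1}, stated above), so the plan is to specialize that theorem.

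Since $H^*(X;L)\neq0$, there is a degree $d_0\geq0$ with $H^{d_0}(X;L)\neq0$. Choose a nonzero class
\[
v_0\in H^{d_0}(X;L).
\]
Apply the theorem with $k=0$, that is, with no additional classes $v_1,\ldots,v_k$.

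The hypothesis on the complex-coefficient classes $v_i$ with $d_i>0$ is then vacuous. The cup product condition reduces to $v_0\neq0$, which holds by our choice. By assumption $L\in\mathcal V_\xi$ is transcendental, so every hypothesis of the theorem is satisfied, and it gives
\[
\cat(X,\xi)>0.
\]

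The only point that needs care is to confirm that the cited theorem permits $k=0$ with the empty cup product interpreted as $v_0$ itself. If one prefers not to rely on this degenerate reading, there are two alternatives. First, one can apply Farber--Sch\"utz's underlying argument directly: a nonzero class in $H^{d_0}(X;L)$ with transcendental $L$ is known to obstruct $N$-movability of the whole space $X$ for large $N$. Second, one can take $k=1$ with $v_1=1\in H^0(X;\mathbb C)$, but this fails because the theorem requires $d_1>0$. So the $k=0$ specialization is the intended route, and no further obstacle is expected.
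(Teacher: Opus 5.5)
Your proposal is correct and is the same argument as the paper, which obtains the corollary by taking $k=0$ in the Farber--Sch\"utz estimate: choose any nonzero $v_0\in H^{d_0}(X;L)$ (with $d_0=0$ allowed), and the cup-product hypothesis reduces to $v_0\neq0$. The alternatives you sketch are unnecessary, since the paper explicitly treats $k=0$ as a permitted case of that theorem.
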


Applied to the Lee class of a closed LCS manifold, this gives a cohomological criterion for $\cat(M,[\theta])>0$, and hence, by Proposition~3.3, an obstruction to the existence of an LCS structure of the first kind with Lee class $[\theta]$.

Thus cohomology with flat local coefficients associated with the Lee class plays a role analogous to ordinary cohomology in the classical cup-length estimate. In the next section, we combine this lower bound with the topology of the exceptional divisor of an LCS blow-up to obtain quantitative estimates for the category of the pullback Lee class.

%% file: blowup.tex
\section{Blow-ups and LCS structures of the second kind}

In this section, we study LCS blow-ups from the viewpoint of the category associated with the Lee class. We first recall the LCS blow-up construction and an obstruction coming from the Euler characteristic. We then prove a quantitative lower bound for the category of the pullback Lee class using the exceptional divisor. Finally, we give a four-dimensional example in which the Euler characteristic of the blow-up vanishes, while the category still excludes LCS structures of the first kind with the prescribed Lee class.

\subsection{LCS blow-ups and the Euler characteristic}

Blow-up constructions also occur naturally in conformal symplectic geometry. Point blow-ups in the locally conformally K\"ahler setting were studied by Vuletescu \cite{V}. In the locally conformally symplectic setting, Chen and Yang proved that the blow-up of an LCS manifold at a point again admits an LCS structure \cite{CY}. This construction was subsequently extended by Yang, Yang and Zhao to blow-ups along suitable compact submanifolds \cite{YYZ}.

Let $(M^{2n},\Omega,\theta)$ be an LCS manifold, and let $i\colon Z\hookrightarrow M$ be a compact submanifold of real codimension $2r$, where $r\geq2$. Following Yang, Yang and Zhao \cite{YYZ}, the submanifold $Z$ is called an \emph{induced globally conformally symplectic submanifold}, or an \emph{IGCS submanifold}, if $i^*\Omega$ is nondegenerate and
\[
i^*[\theta]=0\in H^1(Z;\mathbb R).
\]

Yang, Yang and Zhao proved that the blow-up along such a submanifold again carries an LCS structure.

\begin{theorem}[Yang--Yang--Zhao {\cite{YYZ}}]
Let $(M,\Omega,\theta)$ be an LCS manifold, and let $i\colon Z\hookrightarrow M$ be a compact IGCS submanifold. If $\pi\colon\widetilde M\to M$ is the blow-up of $M$ along $Z$, then $\widetilde M$ admits an LCS structure whose Lee class is $\pi^*[\theta]$.
\end{theorem}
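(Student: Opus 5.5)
The plan is to reduce the statement to the classical symplectic blow-up along a symplectic submanifold (McDuff's construction). The key observation is that the IGCS hypothesis lets us make the LCS form genuinely symplectic on a neighborhood of $Z$ by a conformal change, without changing the Lee class.

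\emph{Step 1: make the Lee form vanish near $Z$.} Fix a tubular neighborhood $U$ of $Z$ that deformation retracts onto $Z$. Since $i^*[\theta]=0\in H^1(Z;\mathbb R)$ and $H^1(U;\mathbb R)\cong H^1(Z;\mathbb R)$, the restriction $\theta|_U$ is exact, say $\theta|_U=dh$ with $h\in C^\infty(U)$. Choose a cutoff function $\chi$ supported in $U$ and equal to $1$ on a smaller neighborhood $U'\supset Z$, and set
\[
\Omega'=e^{-\chi h}\Omega,\qquad \theta'=\theta-d(\chi h).
\]
By the conformal change formula of Section~2, $(\Omega',\theta')$ is again an LCS structure, with the same Lee class $[\theta']=[\theta]$. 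Moreover $\theta'=0$ on $U'$, so $\Omega'|_{U'}$ is closed and nondegenerate, that is, symplectic.

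Since $i^*\Omega$ is nondegenerate and $i^*\Omega'=e^{-h}i^*\Omega$, the submanifold $Z$ is a compact symplectic submanifold of the symplectic manifold $(U',\Omega'|_{U'})$, of codimension $2r$.

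\emph{Step 2: symplectic blow-up.} Choose a compatible complex structure on the symplectic normal bundle $\nu_Z$. By the symplectic neighborhood theorem, a neighborhood of $Z$ in $U'$ is symplectomorphic to a neighborhood of the zero section in $\nu_Z$, with its standard closed model form. The smooth blow-up $\pi\colon\widetilde M\to M$ replaces a small disk bundle $D_\epsilon(\nu_Z)$ by a neighborhood of the exceptional divisor $\mathbb P(\nu_Z)$. McDuff's construction then gives, for $\epsilon$ small enough that everything stays inside $U'$, a closed nondegenerate form $\widetilde\Omega_{\mathrm{loc}}$ on $\pi^{-1}(U')$ that agrees with $\pi^*\Omega'$ outside $\pi^{-1}(D_\epsilon)$.

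\emph{Step 3: gluing.} Define $\widetilde\Omega$ to be $\widetilde\Omega_{\mathrm{loc}}$ on $\pi^{-1}(U')$ and $\pi^*\Omega'$ elsewhere; the two definitions agree on the overlap. Set $\widetilde\theta=\pi^*\theta'$.
\begin{itemize}
\item Away from $\pi^{-1}(D_\epsilon)$, $\pi$ is a diffeomorphism, so $d\widetilde\Omega=\widetilde\theta\wedge\widetilde\Omega$ holds there because it holds for $(\Omega',\theta')$.
\item On $\pi^{-1}(U')$, both sides vanish, since $\widetilde\Omega$ is closed and $\widetilde\theta=0$ there.
\end{itemize}
Hence $(\widetilde\Omega,\widetilde\theta)$ is an LCS structure on $\widetilde M$ with Lee class $\pi^*[\theta']=\pi^*[\theta]$.

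\emph{Main obstacle.} The delicate step is Step 2: carrying out the symplectic blow-up along $Z$, as opposed to at a point, so that the modified form coincides exactly with $\pi^*\Omega'$ near the boundary of the surgery region. This requires the full symplectic neighborhood theorem for $Z$ with its normal bundle data, a choice of unitary structure on $\nu_Z$, and the standard radial interpolation on each fiber $\mathbb C^r$, done uniformly over $Z$. I would follow McDuff's construction directly. Compactness of $Z$ guarantees that a uniform $\epsilon$ exists so that the whole construction fits inside $U'$. The LCS-specific input is then entirely contained in Step 1.
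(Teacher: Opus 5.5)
Your argument is correct: using $i^*[\theta]=0$ to write $\theta=dh$ on a tubular neighborhood of $Z$, rescaling conformally with a cutoff so that the Lee form vanishes near $Z$ without changing the Lee class, performing McDuff's symplectic blow-up inside that neighborhood, and gluing with $\pi^*\theta'$ as Lee form is essentially the construction behind this result. The paper does not reprove the statement but quotes it from \cite{YYZ}; its later remark, that the blown-up structure has a Lee class representative vanishing near the center and hence a Lee form vanishing near the exceptional divisor, points to this same approach.
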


The exceptional divisor is $E=\mathbb P(N_{Z/M})$, which is a $\mathbb{CP}^{r-1}$-bundle over $Z$. Since $\chi(\mathbb{CP}^{r-1})=r$, multiplicativity of the Euler characteristic for fiber bundles gives $\chi(E)=r\chi(Z)$. Replacing a tubular neighborhood of $Z$ by its blow-up therefore gives
\[
\chi(\widetilde M)
=
\chi(M)-\chi(Z)+\chi(E)
=
\chi(M)+(r-1)\chi(Z).
\]

Since the Lee form of an LCS structure of the first kind is nowhere vanishing, and a closed manifold admitting a nowhere-vanishing one-form has vanishing Euler characteristic, we obtain the following.

\begin{proposition}
Let $\widetilde M$ be the blow-up of a closed LCS manifold $M$ along a compact IGCS submanifold $Z$ of real codimension $2r$, where $r\geq2$. If
\[
\chi(M)+(r-1)\chi(Z)\neq0,
\]
then $\widetilde M$ admits no LCS structure of the first kind.
\end{proposition}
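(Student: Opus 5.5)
The statement is a direct combination of the Euler characteristic formula for the blow-up with the fact that the Lee form of a first-kind structure has no zeros. I will argue by contradiction.

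Suppose $\widetilde M$ carries an LCS structure $(\widetilde\Omega,\widetilde\theta)$ of the first kind. As recalled after Definition~\ref{deffirstkind}, there is then an anti-Lee vector field $U$ with $\widetilde\theta(U)=1$. In particular, $U$ is a nowhere-vanishing vector field on the closed manifold $\widetilde M$, and so is the metric dual of $\widetilde\theta$. By the Poincar\'e--Hopf theorem this forces $\chi(\widetilde M)=0$.

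Alternatively, the vanishing can be obtained from the results already quoted. By Corollary~3.2 we have $\cat(\widetilde M,[\widetilde\theta])=0$. The theorem of Farber and Sch\"utz \cite[Theorem~10]{FS1}, used in the proof for $M_4$, then gives $\chi(\widetilde M)=0$. This route avoids Poincar\'e--Hopf, although either argument suffices.

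It remains to compute $\chi(\widetilde M)$ and compare it with the hypothesis. Write $\widetilde M=(M\setminus \nu Z)\cup_{\partial}\widetilde{\nu Z}$. Here $\nu Z$ is a closed tubular neighborhood of $Z$, which deformation retracts onto $Z$. The piece $\widetilde{\nu Z}$ is its blow-up, a disk-bundle-type neighborhood of $E$ that retracts onto $E$. The two pieces are glued along the sphere bundle $S(N_{Z/M})$.

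The sphere bundle has fibre $S^{2r-1}$, so $\chi(S(N_{Z/M}))=\chi(S^{2r-1})\chi(Z)=0$. Applying inclusion--exclusion (Mayer--Vietoris) twice gives
\[
\chi(\widetilde M)=\chi(M)-\chi(Z)+\chi(E).
\]
Since $E$ is a $\mathbb{CP}^{r-1}$-bundle over $Z$, multiplicativity gives $\chi(E)=r\chi(Z)$. Hence $\chi(\widetilde M)=\chi(M)+(r-1)\chi(Z)$, as displayed before the statement.

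By hypothesis this quantity is nonzero, which contradicts $\chi(\widetilde M)=0$. Therefore no LCS structure on $\widetilde M$ is of the first kind.

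The only point that needs some care is the justification of the Euler characteristic formula. It depends on the decomposition along the sphere bundle and on multiplicativity of $\chi$ for fibre bundles over the compact base $Z$. If $Z$ is disconnected, the argument applies componentwise, since $\chi$ is additive over components. Everything else is immediate from results stated earlier.
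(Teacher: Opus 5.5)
Your proof is correct and is essentially the paper's argument: the blow-up formula $\chi(\widetilde M)=\chi(M)+(r-1)\chi(Z)$, combined with the fact that a first-kind structure has a nowhere-vanishing Lee form (equivalently, an anti-Lee vector field), which forces $\chi(\widetilde M)=0$. Your Mayer--Vietoris derivation of the Euler characteristic formula and the alternative route through Farber--Sch\"utz's vanishing theorem are both valid, but they add nothing essential beyond what the paper already records before the statement.
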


\begin{proof}
The blow-up formula gives $\chi(\widetilde M)=\chi(M)+(r-1)\chi(Z)\neq0$. If $\widetilde M$ admitted an LCS structure of the first kind, its Lee form would be nowhere vanishing, and hence $\chi(\widetilde M)=0$, a contradiction.
\end{proof}

In particular, a point blow-up of a closed four-dimensional manifold satisfies $\chi(\widetilde M)=\chi(M)+1$. Thus, if $\chi(M)\neq-1$, the Euler characteristic alone excludes LCS structures of the first kind on the blow-up. If $\chi(M)=-1$, however, then $\chi(\widetilde M)=0$, and this obstruction disappears.

\subsection{A quantitative category obstruction}

We now obtain a lower bound for the category of the pullback Lee class. We first record a standard topological fact about blow-ups.

\begin{lemma}
Let $\pi\colon\widetilde M\to M$ be the blow-up of a manifold $M$ along a submanifold $Z$ of real codimension $2r\geq4$. Then
\[
\pi_*\colon\pi_1(\widetilde M)\longrightarrow\pi_1(M)
\]
is an isomorphism. Consequently,
\[
\pi_*\colon H_1(\widetilde M;\mathbb Z)
\longrightarrow
H_1(M;\mathbb Z)
\]
is an isomorphism.
\end{lemma}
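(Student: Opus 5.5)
The plan is to compare both $\widetilde M$ and $M$ with the common open subset $M\setminus Z\cong\widetilde M\setminus E$, and to apply van Kampen to each. Choose a closed tubular neighborhood $T$ of $Z$, which is a $D^{2r}$-bundle over $Z$. Let $\widetilde T=\pi^{-1}(T)$. Then $\widetilde T$ is a closed disk bundle over $E$, namely the tautological line bundle over $\mathbb P(N_{Z/M})$, and it deformation retracts onto $E$. Set $W=M\setminus\operatorname{int}T=\widetilde M\setminus\operatorname{int}\widetilde T$. Both spaces are then glued from $W$ and a neighborhood piece along the same boundary $\partial T=\partial\widetilde T=S(N_{Z/M})$, which is the unit sphere bundle with fiber $S^{2r-1}$. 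Since $\pi$ is the identity on $W$, the map $\pi_*$ is compatible with the two van Kampen decompositions. It therefore suffices to show that the inclusions $\partial T\hookrightarrow T$ and $\partial\widetilde T\hookrightarrow\widetilde T$ induce isomorphisms on $\pi_1$, compatibly with $\pi$. Because all these spaces may be disconnected if $Z$ is, I will work one component of $Z$ at a time, or use the groupoid version of van Kampen.

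\emph{Step 1 (the $M$ side).} The space $T$ retracts to $Z$. The projection $S(N)\to Z$ is a fibration with fiber $S^{2r-1}$, which is simply connected because $2r-1\geq3$. The long exact sequence of the fibration then shows that $\pi_1(S(N))\to\pi_1(Z)$ is an isomorphism. So $\partial T\hookrightarrow T$ is a $\pi_1$-isomorphism, and van Kampen gives $\pi_1(M)\cong\pi_1(W)$ via inclusion.

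\emph{Step 2 (the $\widetilde M$ side).} The space $\widetilde T$ retracts to $E$, and $E\to Z$ is a $\mathbb{CP}^{r-1}$-bundle. Since $\mathbb{CP}^{r-1}$ is simply connected, $\pi_1(E)\cong\pi_1(Z)$. The composition $\partial\widetilde T=S(N)\to E\to Z$ is the sphere-bundle projection, which is a $\pi_1$-isomorphism by Step 1. Hence $S(N)\to E$ is a $\pi_1$-isomorphism as well, and therefore $\pi_1(\widetilde M)\cong\pi_1(W)$.

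\emph{Step 3 (compatibility).} The isomorphisms $\pi_1(W)\to\pi_1(\widetilde M)\to\pi_1(M)$ compose to the inclusion-induced isomorphism $\pi_1(W)\to\pi_1(M)$. Consequently $\pi_*$ is an isomorphism. Abelianizing gives the statement for $H_1$, since the Hurewicz map is natural.

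The main obstacle is making this compatibility precise: I need that $\pi|_{\widetilde T}\colon\widetilde T\to T$ restricts to the identity on the common boundary. This holds because the blow-up only modifies the fiberwise origin. I also need to handle basepoints and disconnected $Z$ in van Kampen, which I would do by choosing basepoints in $W$.

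Equivalently, one can argue more simply via general position: $Z$ has codimension at least $4$ and $E$ has real codimension $2$ in $\widetilde M$. So both removals preserve $\pi_1$ surjectivity, and the $M$ removal is even bijective because the codimension is at least $3$. Then $\pi_1(W)\to\pi_1(\widetilde M)$ is surjective. Its kernel is generated by meridians of $E$, and each meridian bounds a disk in a fiber $\mathbb{CP}^{r-1}$ minus a point. This gives an alternative check of Step 2 if the fibration argument needs support.
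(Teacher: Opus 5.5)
Your proof is correct and follows essentially the same route as the paper: you decompose $M$ and $\widetilde M$ along the common sphere bundle $S(N_{Z/M})$, use the simply connected fibers $S^{2r-1}$ and $\mathbb{CP}^{r-1}$ to show that both boundary inclusions into the neighborhood pieces are $\pi_1$-isomorphisms, and then apply van Kampen. Your Step 3 factors the inclusion $W\hookrightarrow M$ through $\pi$, and together with your handling of basepoints and of disconnected $Z$, it makes explicit the compatibility that the paper only asserts.
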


\begin{proof}
Let $\nu Z$ be a tubular neighborhood of $Z$ and set $A=M\setminus\operatorname{Int}(\nu Z)$. The bundle $\partial(\nu Z)\to Z$ has fiber $S^{2r-1}$, which is simply connected since $r\geq2$, so $\pi_1(\partial(\nu Z))\cong\pi_1(Z)$. Moreover, $\nu Z$ deformation retracts onto $Z$.

Let $\widetilde{\nu Z}$ denote the blown-up tubular neighborhood. It deformation retracts onto the exceptional divisor $E=\mathbb P(N_{Z/M})$, and the projection $E\to Z$ has simply connected fiber $\mathbb{CP}^{r-1}$. Thus $\pi_1(\widetilde{\nu Z})\cong\pi_1(E)\cong\pi_1(Z)$.

The inclusions of the common boundary induce the corresponding identifications on fundamental groups. Applying the Seifert--van Kampen theorem to
\[
M=A\cup_{\partial(\nu Z)}\nu Z,
\qquad
\widetilde M=A\cup_{\partial(\nu Z)}\widetilde{\nu Z},
\]
shows that the blow-down map induces an isomorphism $\pi_1(\widetilde M)\cong\pi_1(M)$. The assertion for $H_1$ follows by abelianization.
\end{proof}

\begin{theorem}
Let $(M,\Omega,\theta)$ be a closed LCS manifold with $[\theta]\neq0$, and let $i\colon Z\hookrightarrow M$ be a nonempty compact IGCS submanifold of real codimension $2r$, where $r\geq2$. If $\pi\colon\widetilde M\to M$ is the blow-up along $Z$, then
\[
\cat(\widetilde M,\pi^*[\theta])\geq r-1.
\]
More generally, if there exist classes $u_1,\ldots,u_s\in H^{>0}(M;\mathbb C)$ such that
\[
i^*(u_1\smile\cdots\smile u_s)\neq0,
\]
then
\[
\cat(\widetilde M,\pi^*[\theta])\geq r-1+s.
\]
\end{theorem}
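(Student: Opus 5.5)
The plan is to apply the Farber--Sch\"utz estimate (Theorem~7 of \cite{FS1}, recalled in the previous section) on $X=\widetilde M$ with $\xi=\pi^*[\theta]$. We need a transcendental $L$, a twisted class $v_0\in H^*(\widetilde M;L)$, and $k=r-2+s$ ordinary classes of positive degree whose cup product with $v_0$ is nonzero. This gives $\cat(\widetilde M,\pi^*[\theta])>r-2+s$, i.e.\ $\geq r-1+s$. The first assertion is the case $s=0$, where the empty product is $1$ and $i^*1\neq0$ because $Z\neq\emptyset$.

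\textbf{Step 1: the local system.} Since $[\theta]\neq0$, choose a transcendental $L_M\in\mathcal V_{[\theta]}$ on $M$ and set $L=\pi^*L_M$. By the preceding lemma, $\pi_*\colon H_1(\widetilde M;\mathbb Z)\to H_1(M;\mathbb Z)$ is an isomorphism. It carries $\ker\pi^*[\theta]$ onto $\ker[\theta]$, so the groups $H$ for the two pairs coincide and the monodromy ring maps agree. Hence $L\in\mathcal V_{\pi^*[\theta]}$ is transcendental.

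Next, because $i^*[\theta]=0$, every loop in $Z$ lies in $\ker[\theta]$, so $L_M|_Z$ is trivial. Consequently $L$ is trivial on the exceptional divisor $E$, and also on a tubular neighbourhood $\nu E$, which retracts onto $E$. Write $j\colon E\hookrightarrow\widetilde M$ and $p\colon E\to Z$, so that $\pi\circ j=i\circ p$.

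\textbf{Step 2: the classes.} Let $\tau\in H^2(\nu E,\partial\nu E;\mathbb C)$ be the Thom class of the normal line bundle $N_E$. Using the trivialization of $L$ on $\nu E$, regard $\tau$ as a class in $H^2(\nu E,\partial\nu E;L)$. By excision this group is $H^2(\widetilde M,\widetilde M\setminus\operatorname{Int}\nu E;L)$, and we let $v_0\in H^2(\widetilde M;L)$ be the image of $\tau$; it is the twisted Poincar\'e dual of $E$. Let $e\in H^2(\widetilde M;\mathbb C)$ be the untwisted Poincar\'e dual of $E$, and take
\[
v_0\smile e^{r-2}\smile\pi^*u_1\smile\cdots\smile\pi^*u_s .
\]

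\textbf{Step 3: nonvanishing.} We restrict this product to $E$. Under the trivialization of $L|_E$, the restriction $j^*v_0$ becomes the Euler class $c_1(N_E)$. Likewise $j^*e=c_1(N_E)$, and $N_E$ is the tautological bundle, so $c_1(N_E)=-h$, where $h$ is the hyperplane class of the $\mathbb{CP}^{r-1}$-bundle $E\to Z$. Also $j^*\pi^*u=p^*i^*u$. Therefore the restriction equals
\[
(-h)^{r-1}\smile p^*i^*(u_1\smile\cdots\smile u_s).
\]
By the Leray--Hirsch theorem, $H^*(E;\mathbb C)$ is a free $H^*(Z;\mathbb C)$-module on $1,h,\dots,h^{r-1}$. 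Hence this class is nonzero whenever $i^*(u_1\smile\cdots\smile u_s)\neq0$. So the product is nonzero in $H^*(\widetilde M;L)$, and Theorem~7 applies.

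\textbf{Main obstacle.} The delicate step is the twisted-coefficient bookkeeping in Steps 2 and 3. One must check that the cup product $H^*(\widetilde M;L)\otimes H^*(\widetilde M;\mathbb C)\to H^*(\widetilde M;L)$ commutes with restriction to $\nu E$. One must also check that, after trivializing $L|_{\nu E}$, the restriction of the extended Thom class is the Euler class $c_1(N_E)$, exactly as in the untwisted case. I would handle both by working entirely inside $\nu E$, where $L$ is literally the trivial bundle $\mathbb C$. The only global input is the excision map $H^*(\nu E,\partial\nu E;L)\to H^*(\widetilde M;L)$, which is natural for cup products with untwisted classes.
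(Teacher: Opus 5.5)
Your proposal is correct and follows essentially the same route as the paper. Your Thom-class/excision construction of $v_0$ is the paper's twisted Gysin class $j_!(1_L)$, and your direct restriction of $v_0\smile e^{r-2}\smile\pi^*u_1\smile\cdots\smile\pi^*u_s$ to $E$ matches the paper's self-intersection and projection-formula computation, up to the sign convention $h$ versus $-h$; both finish with the projective bundle (Leray--Hirsch) theorem and Farber--Sch\"utz Theorem~7.
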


\begin{proof}
Set $\xi=[\theta]$ and choose a transcendental flat complex line bundle $L\in\mathcal V_\xi$. By the preceding lemma, the blow-down map induces an isomorphism
\[
\pi_*\colon H_1(\widetilde M;\mathbb Z)\longrightarrow H_1(M;\mathbb Z).
\]
Since $(\pi^*\xi)(a)=\xi(\pi_*a)$, the isomorphism $\pi_*$ maps $\ker(\pi^*\xi)$ onto $\ker\xi$ and therefore induces
\[
H_1(\widetilde M;\mathbb Z)/\ker(\pi^*\xi)
\cong
H_1(M;\mathbb Z)/\ker\xi.
\]
Under this identification, the monodromy homomorphism of $\pi^*L$ agrees with that of $L$. Hence $\pi^*L$ is transcendental with respect to $\pi^*\xi$.

Let $E=\mathbb P(N_{Z/M})$ be the exceptional divisor, let $j\colon E\hookrightarrow\widetilde M$ be the inclusion, and let $p\colon E\to Z$ be the natural projection. Since $Z$ is IGCS, $i^*\xi=0$. Hence the monodromy of $L$ is trivial on the image of $H_1(Z;\mathbb Z)$ in $H_1(M;\mathbb Z)$, and therefore $i^*L$ is trivial. Consequently, $j^*\pi^*L=p^*i^*L$ is also trivial.

Choose a nowhere-vanishing flat section $1_L\in H^0(E;j^*\pi^*L)$. Using the trivialization determined by $1_L$, we identify
\[
H^*(E;j^*\pi^*L)\cong H^*(E;\mathbb C).
\]

The normal bundle of the exceptional divisor is the tautological complex line bundle
\[
N_{E/\widetilde M}\cong\mathcal O_E(-1).
\]
In particular, it is canonically oriented, and hence there is a Gysin homomorphism with local coefficients
\[
j_!\colon H^*(E;j^*\pi^*L)
\longrightarrow
H^{*+2}(\widetilde M;\pi^*L).
\]
Put $h=c_1(\mathcal O_E(-1))\in H^2(E;\mathbb C)$. We use the projection and self-intersection formulas for the Gysin homomorphism; see \cite[Section~5.1, Proposition~5.4]{M}. Define
\[
v_0=j_!(1_L)\in H^2(\widetilde M;\pi^*L),
\qquad
e=j_!(1)\in H^2(\widetilde M;\mathbb C).
\]

By the projection formula,
\[
v_0\smile e^q
=
j_!\bigl(1_L\smile j^*(e^q)\bigr).
\]
Since the self-intersection formula gives $j^*e=j^*j_!(1)=h$, we obtain
\[
v_0\smile e^q=j_!(h^q),
\qquad
j^*(v_0\smile e^q)=h^{q+1}.
\]
For $0\leq q\leq r-2$, the restriction of $h^{q+1}$ to each fiber $\mathbb{CP}^{r-1}$ of $p\colon E\to Z$ is nonzero. Therefore $v_0\smile e^q\neq0$, and in particular
\[
v_0\smile e^{r-2}\neq0.
\]

Applying the Farber--Sch\"utz estimate \cite[Theorem~7]{FS1} to $v_0$ together with $r-2$ copies of the positive-degree class $e$ gives
\[
\cat(\widetilde M,\pi^*\xi)\geq r-1.
\]

Now suppose that $u_1,\ldots,u_s\in H^{>0}(M;\mathbb C)$ satisfy
\[
a=i^*(u_1\smile\cdots\smile u_s)\neq0.
\]
The projection formula gives
\[
v_0\smile e^{r-2}\smile\pi^*u_1\smile\cdots\smile\pi^*u_s
=
j_!\left(h^{r-2}\smile p^*a\right).
\]
Restricting this class to $E$ and applying the self-intersection formula gives $h^{r-1}\smile p^*a$. By the projective bundle theorem \cite{BT},
\[
H^*(E;\mathbb C)
=
H^*(Z;\mathbb C)\{1,h,\ldots,h^{r-1}\}
\]
as an $H^*(Z;\mathbb C)$-module. Hence $a\neq0$ implies $h^{r-1}\smile p^*a\neq0$, and therefore
\[
v_0\smile e^{r-2}\smile\pi^*u_1\smile\cdots\smile\pi^*u_s
\neq0.
\]
Applying the Farber--Sch\"utz estimate to the $r-2+s$ positive-degree classes gives
\[
\cat(\widetilde M,\pi^*\xi)\geq r-1+s.
\]
\end{proof}

The basic estimate already gives the following class-wide obstruction.

\begin{corollary}
Under the assumptions of the preceding theorem, the class $\pi^*[\theta]$ has no nowhere-vanishing closed one-form representative. Consequently, every LCS structure on $\widetilde M$ whose Lee class is $\pi^*[\theta]$ is of the second kind.
\end{corollary}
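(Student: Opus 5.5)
The corollary is deduced directly from the preceding theorem together with the vanishing result of Section~3. I argue by contradiction. Suppose $\pi^*[\theta]$ admitted a nowhere-vanishing closed one-form representative $\omega$ on $\widetilde M$. The blow-up $\widetilde M$ is again a closed manifold, since $M$ is closed and the blow-up replaces a tubular neighborhood of the compact submanifold $Z$ by a compact piece. The proposition of Section~3.1 then gives
\[
\cat(\widetilde M,\pi^*[\theta])=\cat(\widetilde M,[\omega])=0.
\]
Here I use that $\cat(X,\xi)$ depends only on the cohomology class $\xi$, as recalled in Section~2.1.

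On the other hand, the hypotheses of the corollary are exactly those of the preceding theorem: $[\theta]\neq0$, $Z$ is a nonempty compact IGCS submanifold, and its real codimension is $2r$ with $r\geq2$. The theorem therefore gives
\[
\cat(\widetilde M,\pi^*[\theta])\geq r-1\geq1>0.
\]
This contradicts the previous display, so no nowhere-vanishing closed one-form represents $\pi^*[\theta]$.

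For the second assertion, let $(\Omega',\theta')$ be any LCS structure on $\widetilde M$ with $[\theta']=\pi^*[\theta]$. If it were of the first kind, then by Section~2.3 its Lee form $\theta'$ would be nowhere vanishing. But $\theta'$ represents $\pi^*[\theta]$, which contradicts the first part. Equivalently, one may apply the proposition of Section~3.2 directly to $(\widetilde M,\Omega',\theta')$, since $\cat(\widetilde M,[\theta'])>0$. Hence the structure is of the second kind.

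There is no serious obstacle here. The only points to check are that $\widetilde M$ is closed, so that the vanishing proposition applies, and that the category is an invariant of the class rather than of the representative. Both are already recorded earlier in the paper.
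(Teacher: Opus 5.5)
Your proof is correct and matches the paper's argument: the theorem gives $\cat(\widetilde M,\pi^*[\theta])\geq r-1>0$, which contradicts the vanishing $\cat(\widetilde M,[\alpha])=0$ that Proposition~3.1 would give for a nowhere-vanishing representative $\alpha$, and the second assertion follows because the Lee form of a first-kind LCS structure is nowhere vanishing. Your checks that $\widetilde M$ is closed and that the category depends only on the class are details the paper leaves implicit.
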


\begin{proof}
Since $r\geq2$, we have $\cat(\widetilde M,\pi^*[\theta])\geq r-1>0$. If $\alpha$ were a nowhere-vanishing closed one-form representing $\pi^*[\theta]$, Proposition~3.1 would give $\cat(\widetilde M,[\alpha])=0$, a contradiction. The second assertion follows because the Lee form of an LCS structure of the first kind is nowhere vanishing.
\end{proof}

For point blow-ups, the estimate takes a particularly simple form.

\begin{corollary}
Let $(M^{2n},\Omega,\theta)$ be a closed LCS manifold with $[\theta]\neq0$, and let $\pi\colon\widetilde M\to M$ be the blow-up at a point. Then
\[
\cat(\widetilde M,\pi^*[\theta])\geq n-1.
\]
Consequently, every LCS structure on $\widetilde M$ whose Lee class is $\pi^*[\theta]$ is of the second kind.
\end{corollary}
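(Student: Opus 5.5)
This corollary is the special case of the preceding theorem in which $Z$ is a single point, so I will check that a point meets its hypotheses and read off $r$. Let $Z=\{p\}$ for some $p\in M$, with inclusion $i\colon Z\hookrightarrow M$. Then $Z$ is a nonempty compact submanifold of real codimension $2n$, so $r=n$, and the standing assumption $n\geq2$ from the definition of an LCS manifold gives $r\geq2$.

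Next I will verify that $Z$ is an IGCS submanifold. Since $Z$ is zero-dimensional, $i^*\Omega$ is the zero form on a zero-dimensional space, which counts as (vacuously) nondegenerate. Moreover $H^1(Z;\mathbb R)=0$, so $i^*[\theta]=0$. The only point needing any care is this convention: that a point counts as a valid blow-up center, both for the IGCS definition and for the blow-up theorem of Yang--Yang--Zhao. This is consistent with the point blow-up of Chen and Yang \cite{CY} recalled above, so no real obstacle arises.

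With these hypotheses in place, the preceding theorem applies with $r=n$. It gives
\[
\cat(\widetilde M,\pi^*[\theta])\geq n-1 .
\]
Because $n\geq2$, this lower bound is positive.

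The second assertion is then immediate. Either apply the preceding corollary directly, or combine the positivity of the bound with Proposition~3.3, using that first-kind LCS structures have nowhere-vanishing Lee form. In either case, every LCS structure on $\widetilde M$ with Lee class $\pi^*[\theta]$ is of the second kind.
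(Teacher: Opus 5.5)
Your proposal is correct and matches the paper's proof: you specialize the preceding theorem to $Z=\{p\}$, which is a (vacuously) IGCS submanifold of real codimension $2n$, so $r=n\geq2$ gives $\cat(\widetilde M,\pi^*[\theta])\geq n-1>0$. The second-kind conclusion then follows from the preceding corollary, and your extra checks ($i^*\Omega$ vacuously nondegenerate, $H^1(\{p\};\mathbb R)=0$, and $n\geq2$) only spell out what the paper's one-line proof leaves implicit.
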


\begin{proof}
A point is an IGCS submanifold of real codimension $2n$, so the preceding theorem applies with $r=n$.
\end{proof}

\begin{remark}
For the particular LCS structure arising from the blow-up construction, one may choose a representative of the Lee class which vanishes in a neighborhood of the center \cite{YYZ}. Its pullback therefore vanishes near the exceptional divisor, so this particular structure is directly seen to be of the second kind. The preceding corollary is stronger: it depends only on the cohomology class $\pi^*[\theta]$ and applies to every LCS structure with that Lee class.
\end{remark}

\subsection{An example with vanishing Euler characteristic}

We conclude with an example in which the Euler characteristic of the blow-up vanishes but the category obstruction remains nontrivial. Let
\[
M=(S^1\times S^3)\#(S^1\times S^3)\#\mathbb{CP}^2.
\]
A direct computation gives $\chi(M)=-1$, $\sigma(M)=1$, $H^1(M;\mathbb R)\cong\mathbb R^2$, and
\[
H^2(M;\mathbb Z)\cong\mathbb Ze,
\qquad
e^2=1.
\]
The class $e$ is characteristic, so $e\equiv w_2(M)\pmod2$, and
\[
e^2=1=2\chi(M)+3\sigma(M).
\]
By the criterion for almost complex structures on closed oriented four-manifolds \cite{GS}, $M$ admits an almost complex structure and hence a nondegenerate two-form.

Let $0\neq\xi\in H^1(M;\mathbb R)$. By the existence theorem of Bertelson and Meigniez \cite[Theorem~A]{BM2}, $M$ admits an LCS structure whose Lee class is $\xi$. Let $\pi\colon\widetilde M\to M$ be the blow-up at a point. Since $M$ is four-dimensional,
\[
\chi(\widetilde M)=\chi(M)+1=0.
\]
Thus the Euler characteristic gives no obstruction to the existence of an LCS structure of the first kind on $\widetilde M$. On the other hand, the point blow-up estimate gives
\[
\cat(\widetilde M,\pi^*\xi)\geq1.
\]
Consequently, every LCS structure on $\widetilde M$ whose Lee class is $\pi^*\xi$ is of the second kind.

Thus the category obstruction remains nontrivial even when the Euler characteristic obstruction vanishes. In particular, $\pi^*\xi$ cannot be represented by a nowhere-vanishing closed one-form.